\numberwithin{equation}{section}
 \newtheorem{prop}[equation]{Proposition}
\newtheorem{lemma}[equation]{Lemma}
\newtheorem{corollary}[equation]{Corollary}
\newtheorem{definition}[equation]{Definition}
\newtheorem{theorem}[equation]{Theorem}
\newtheorem{example}[equation]{Example}
\newtheorem{observation}[equation]{\textbf{Observation}}
\begin{document}

\title{On the isotropy group of a simple derivation}

\author{L.N.Bertoncello}
\address{Universidade Federal de S\~ao Carlos, Via Washington Luiz, Km 235
	, 13565-905 S\~ao Carlos, Brazil}
\email{luciene@dm.ufscar.br}

\author{D. Levcovitz}
\address{Departamento de Matem\'atica, Instituto de Ci\^encias Matem\'aticas e de Computa\c{c}\~ao, Universidade de S\~ao Paulo, Caixa Postal 668, CEP 13560-970, S\~ao Carlos, SP, Brazil.}
\email{lev@icmc.usp.br}

\begin{abstract}
Let $R=K[X_1,\dots, X_n]$ be a polynomial ring in $n$ variables over a field $K$ of charactersitic zero and $d$ a $K$-derivation of $R$. Consider the isotropy  group if $d$: $ \text{Aut}(R)_d :=\{\rho \in \text{Aut}_K(R)|\; \rho d \rho^{-1}=d\}$. In his doctoral thesis (\cite{baltazar-2014}), Baltazar proved that if $d$ is a simple Shamsuddin derivation of $K[X_1,X_2]$, then its isotropy group is trivial. He also gave an example of a non-simple  derivation whose isotropy group is infinite. Recently, Mendes and Pan (\cite{mendes-pan}) generalized this result
to an arbitrary derivation of $K[X_1,X_2]$ proving that a derivation of $K[X_1,X_2]$ is simple if, and only if, its isotropy group is trivial.  In this paper, we prove that the isotropy group of a simple Shamsuddin derivation of the polynomial ring
$R=K[X_1,\dots, X_n]$ is trivial. We also calculate other isotropy groups of (not necessarily simple) derivations of $K[X_1,X_2]$ and prove that they are finite cyclic groups.

\end{abstract}

\maketitle

\section{Introduction}

Throughout this  paper, $K$ is a field of characteristic zero. Let $d$ be a derivation of a commutative $K$-algebra $R$. We say that $d$ is a {\it {simple derivation of $R$}}  (or just that $R$ is {\it $d$-simple}) if $R$ does not have  any proper non-zero ideal $I$ such that $d(I)\subseteq I$. Such an ideal $I$ is called a {\it $d$-invariant} ideal, a {\it $d$-stable} ideal or simply a {\it $d$-ideal}.

Research on simple derivations of commutative $K$-algebras has increased significantly in the past years. This was motivated, on one hand, by the  connection with the  theory of noncommutative noetherian simple rings. In fact, if $R[X;d]$ is the Ore extension of $R$ by $d$, then $R[X;d]$ is a simple ring (in the sense that it has no non-trivial two-sided ideals) if, and only if, $R$ is $d$-simple (see \cite{goodearl-warfield}). In this case, if $R$ is a noetherian $K$-algebra, then so is $R[X,d]$ (by Hilbert basis theorem) and $R[X;d]$ provides a useful example for testing conjectures in the (still mysterious) theory of  noncommutative noetherian simple rings.

On the other hand, there have been many recent connections of
$d-$ simplicity with commutative algebra and algebraic geometry via the theory of holomorphic foliations (see, for example \cite{collier-jalgebra2008}, \cite{papercollier-proceedings} and \cite{tesececilia}  ), $\mathcal{D}$-modules (see, for example, \cite{collier-geometriadedicata} and \cite{collier-glasgow2007}) and also with the question of algebraic independence of solutions of certain differential equations in the power series ring $K[[t]]$ (see \cite{brumatti-lequain-lev}).

We should mention, at least, two important results in commutative algebra that involve $d$-simple rings. A.Seidenberg showed in \cite{seidenberg1967} that if a finitely  generated domain $R$ admits a simple derivation $d$, then $R$ is regular. R.Hart showed in \cite{hart1975} that if $R$ is a localization of a finitely generated domain, then $R$ is regular if, and only if, $R$ admits a simple derivation $d$.

 Despite their ubiquity, simple derivations are far from being well  understood and even a characterization of simple derivations of the polynomial ring $K[X_1,\dots,X_n]$ cannot be foreseen up to now (the only known and trivial case is when $n=1$). Nevertheless, there is a class of polynomial derivations, the Shamsuddin derivations, whose simplicity  we  now understand better. A {\it Shamsuddin derivation} is a derivation $d$ of the polynomial ring $K[X; Y_1\dots,Y_n]$ of the type  $d=\partial_X+\sum_{i=1}^{n}(a_iY_i+b_i) \, \partial_{Y_i}$ with $a_i, b_i \in K[X]$ for every $i=1. \dots,n$. Owing to a seminal paper of Y.Lequain (\cite{lequain}), we can decide, effectively, if a Shamsuddin derivation is simple or not in terms of the polynomials $a_i$  and $b_i$. We will recall Lequain's result in the next paragraph.

Observe that simplicity is preserved by the action of the group of automorphisms of $R$, over the module of derivations of $R$, by conjugation. More precisely, if $R$ is a $K$-algebra, its $K$-automorphism group Aut$_K(R)$ acts by conjugation over the module of $K$-derivations of $R$, Der$_K(R)$: given $d\in$ Der$_K(R)$ and $\rho \in$ Aut$_K(R)$ then $\rho d \rho^{-1} \in$ Der$_K(R)$. Moreover, $d$ is simple if, and only if, $\rho d \rho^{-1}$ is  simple.

In order to understand  this action better, we focus on the stabilizer subgroup of the action. The {\it isotropy group} of  a derivation $d$ is its stabilizer subgroup:

$$ \text{Aut}(R)_d :=\{\rho \in \text{Aut}_K(R)|\; \rho d \rho^{-1}=d\}.$$
This group behaves well with respect to the conjugation action: if $d^{\prime}= \rho d \rho^{-1}$, then
$\text{Aut}(R)_{d^{\prime}}=\rho (\text{Aut}(R)_d) \rho^{-1}$; the isotropy groups are conjugated.

In his doctoral thesis, Baltazar (see \cite{baltazar-2014}) investigated the isotropy group of a simple Shamsuddin derivation of a polynomial ring in two variables
$K[X,Y]$. He showed that this group is trivial. He also calculated the isotropy group of the non-simple derivation $\partial_X$ of $K[X,Y]$ ((Y) is a $d$-ideal) and obtained the infinite group
$$\text{Aut} (K[X,Y])_{\partial_X} =\{\rho:(X,Y)\mapsto (X+p(Y),aY+b)|\, p(Y)\in K[Y], a\in K^{\star}, b\in K \}.$$ Baltazar thesis advisor was I. Pan. Based on this  result of his thesis and on this example, they conjectured:

{\bf Baltazar-Pan conjecture:} Let $d$ be a simple derivation of a finitely generated $K$-algebra $R$. Then its isotropy group is finite.

As far as we know, the status of this conjecture, up to now, is the following: as we mentioned, Baltazar proved it for  Shamsuddin derivations in two variables in his doctoral thesis (\cite{baltazar-2014}). Recently, Mendes and Pan proved the conjecture in dimension two (\cite{mendes-pan}). The main purpose of this paper is to prove the Baltazar-Pan conjecture for an arbitrary simple Shamsuddin derivation of a polynomial ring $K[X_1,\dots, X_n]$. In fact we show that, for this class of derivations, the isotropy group is trivial (see Theorem \ref{teorema1}). To do so, we strongly use the characterization of simple Shamsuddin derivations given by Lequain in \cite{lequain}.

This paper is organized as follows: in section 2 we recall the main results of \cite{lequain} that  will be used to prove our results. We also use this section  to establish the basic notations of the paper. In section 3, we prove the main theorem on the isotropy group of a simple Shamsuddin derivation (Theorem \ref{teorema1}). In section 4, we study the isotropy group of certain derivations of the polynomial ring in two variables  $K[X,Y]$ that may not be simple. We prove that their isotropy group are finite, but not necessarily trivial. In section 5, we consider derivations  that are simple, but not Shamsuddin. We claim, without proving it,  that their isotropy group are trivial as well. Finally, in section 6, based on all previous results and examples,  we formulate another conjecture on the isotropy group of a simple derivation of a polynomial ring.

\section{Simple Shamsuddin derivations: Lequain's characterization}

Recall that throughout this paper, $K$ is a field of characteristic zero. Given integers  $s,r_1,\dots,r_s \geq 1$, we consider $X\cup \{Y_{i,j};i=1,\dots,s,j=1,\dots,r_i\}$ a set of indeterminates over $K$. We denote the derivation $\partial_{Y_{i,j}}$ of the polynomial ring $K[X;\{Y_{i,j};i=1,\dots,s,\, j=1,\dots,r_i\}]$ simply by $\partial_{i,j}$. For an element $f\in K[X]$, we will often use $f^{\prime}$ instead of $\partial_X(f)$.

Remember that a derivation of the polynomial ring $K[X;Y_1, \cdots, Y_n]$ of the type $d=\partial_X+\sum_{i=1}^{n}(a_iY_i+b_i) \, \partial_{Y_i}$ with $a_i, b_i \in K[X]$, for every $i=1, \dots,n$, was defined to be a {\it Shamsuddin derivation}. Grouping the terms that have the same $a_i$, we can rewrite $d$ in the following form:
$$d=\partial_X + \sum_{i=1}^{s} \sum_{j=1}^{r_i}(a_iY_{i,j}+b_{i,j})\partial_{i,j} ,$$
with $a_i, b_{i,j}\in K[X]$ and $a_i\neq a_k$ if $i\neq k$. This specific form of the derivation $d$ is called the {\it canonical form} of $d$. The restriction $d|_{K[X;Y_{i,1},\dots,Y_{i,r_i}]}$, that is the derivation $d_i=\partial_X+ \sum_{j=1}^{r_i}(a_iY_{i,j}+b_{i,j})\partial_{i,j}$ of the polynomial ring  $K[X;Y_{i,1},\dots,Y_{i,r_i}]$ is called the i-th {\it canonical component} of $d$.

Although we will not need it explicitly, it would be important to begin \\ announcing the following simple and beautiful result  from Shamsuddin.

\begin{theorem} (Shamsuddin, \cite{shamsuddin}) Let $R$ be a ring, $Y$ an indeterminate over $R$ and $d$ a derivation of $R[Y]$ such that:
$d(R)\subseteq R$, $R$ is $d-$simple, $d(Y)=aY+b$ with $a,b\in R$. Then, the following statements are equivalent:
\begin{enumerate}
\item [(i)] $R[Y]$ is $d-$simple.
\item [(ii)] The equation $d(Z)=aZ+b$ does not have any solution in $R$.
\end{enumerate}
\end{theorem}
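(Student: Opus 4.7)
The plan is to prove the equivalence in both directions by constructing explicit obstructions.

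For the direction $(i)\Rightarrow(ii)$, I would argue contrapositively: assume there exists $Z\in R$ with $d(Z)=aZ+b$ and then exhibit a nontrivial $d$-ideal of $R[Y]$. The natural candidate is the principal ideal $I=(Y-Z)$, since a direct computation gives
\[
d(Y-Z)=(aY+b)-(aZ+b)=a(Y-Z)\in I.
\]
This ideal is proper because $Y-Z$ is monic of degree one in $Y$, hence $R[Y]/(Y-Z)\cong R\neq 0$, and it is nonzero, contradicting $d$-simplicity of $R[Y]$.

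For $(ii)\Rightarrow(i)$, I again argue contrapositively. Suppose $R[Y]$ is not $d$-simple, and let $I$ be a proper nonzero $d$-ideal. Let $n$ be the minimal $Y$-degree of a nonzero element of $I$, and consider the set $J\subseteq R$ consisting of $0$ together with all leading coefficients (in $Y$) of elements of $I$ having degree exactly $n$. I would first verify that $J$ is an ideal of $R$, and then that $J$ is $d$-stable: if $f=cY^{n}+\cdots\in I$, then $d(f)-n a f\in I$ has $Y$-degree at most $n$, and its coefficient of $Y^{n}$ is $d(c)$, so $d(c)\in J$. By $d$-simplicity of $R$ and $J\neq 0$, we get $J=R$, so $I$ contains a monic element $f=Y^{n}+c_{n-1}Y^{n-1}+\cdots+c_{0}$.

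Now the minimality of $n$ is the engine. The element $d(f)-n a f\in I$ has $Y$-degree at most $n-1$ (the $Y^{n}$ coefficients cancel since $d(1)=0$), hence by minimality must vanish, giving the identity $d(f)=n a f$ in $R[Y]$. Comparing coefficients of $Y^{n-1}$ on both sides,
\[
d(c_{n-1})+(n-1)a c_{n-1}+n b=n a c_{n-1},
\]
which rearranges to $d(-c_{n-1}/n)=a(-c_{n-1}/n)+b$. Thus $Z:=-c_{n-1}/n\in R$ solves the equation, contradicting (ii); the case $n=0$ is ruled out because a monic element of degree $0$ is the unit $1\in I$, contradicting properness. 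The one subtle point I expect to need care with is showing $J$ is genuinely $d$-stable (the passage $d(f)-naf\in I$ is what makes the leading coefficient transform cleanly), and one must note that characteristic zero enters through the division by $n$ at the very last step.
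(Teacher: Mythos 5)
The paper does not prove this theorem at all: it is quoted from Shamsuddin's thesis as a known result (the authors even remark that they ``will not need it explicitly''), so there is no in-paper argument to compare yours against. Your proof is correct and is essentially the standard one. The direction $(i)\Rightarrow(ii)$ via the $d$-stable ideal $(Y-Z)$ is exactly right. For $(ii)\Rightarrow(i)$, the leading-coefficient ideal $J$ of the minimal-degree elements of $I$ is indeed an ideal (if two leading coefficients sum to zero, the sum of the corresponding polynomials drops below degree $n$ and hence is $0$ by minimality, so closure under addition still holds), and your computation that $d(f)-naf$ has $Y^{n}$-coefficient $d(c)$ correctly shows $J$ is $d$-stable; simplicity of $R$ then yields a monic $f\in I$, minimality forces $d(f)=naf$, and the $Y^{n-1}$-coefficient comparison $d(c_{n-1})=ac_{n-1}-nb$ produces the solution $Z=-c_{n-1}/n$. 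The one point worth stating explicitly is that the final division requires $n$ to be invertible in $R$: as literally stated (``let $R$ be a ring'') the argument needs $\mathbb{Q}\subseteq R$, which is harmless here since the paper works throughout with algebras over a field of characteristic zero, but you are right to flag it as the place where the characteristic hypothesis is genuinely used.
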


We will now recall  the main results of \cite{lequain}. They will be used abundantly in the next section.

\begin{theorem}\label{teoremayves3.1}(\cite{lequain}, Theorem 3.1, Local-Global Principle)
The following properties are equivalent:
\begin{enumerate}
\item[(i)] d is a simple derivation of $K[X;\{Y_{i,j};i=1,\dots,s,\, j=1,\dots,r_i\}]$.
\item[(ii)] For every $i=1,\dots,s$, $d_i=\partial_X+ \sum_{j=1}^{r_i}(a_iY_{i,j}+b_{i,j})\partial_{i,j}$ is a simple derivation of $K[X;Y_{i,1},\dots,Y_{i,r_i}]$.
\end{enumerate}
\end{theorem}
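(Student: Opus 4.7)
The plan is to treat the two implications separately: (i) $\Rightarrow$ (ii) is an easy ideal-extension argument, while (ii) $\Rightarrow$ (i) is the substantive direction and will be carried out by induction on the total number of variables, using Shamsuddin's theorem at each step.

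For (i) $\Rightarrow$ (ii) I argue by contrapositive. Suppose some $d_i$ admits a proper nonzero invariant ideal $I \subsetneq K[X;Y_{i,1},\dots,Y_{i,r_i}]$; set $R = K[X;\{Y_{k,l}\}]$ and consider $\widetilde I = IR$. Because $R$ is a free (hence faithfully flat) module over $K[X;Y_{i,1},\dots,Y_{i,r_i}]$, $\widetilde I$ remains nonzero and proper. The Leibniz rule, together with the elementary fact that $d$ restricted to the $i$-th canonical subring agrees with $d_i$, gives $d(\widetilde I) \subseteq \widetilde I$, contradicting the $d$-simplicity of $R$.

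For (ii) $\Rightarrow$ (i) I induct on $N = r_1 + \dots + r_s$. The base case $N = 1$ is precisely Shamsuddin's theorem applied to $R = K[X]$ with $\partial_X$. For the inductive step, single out a ``last'' variable $Y_{s,r_s}$, set $R' = K[X;\{Y_{k,l} : (k,l) \neq (s,r_s)\}]$, and let $d' = d|_{R'}$. Then $d'$ is again a canonical Shamsuddin derivation with $N-1$ variables; its components are $d_1,\dots,d_{s-1}$ together with the truncation $d_s'$ of $d_s$ obtained by dropping $Y_{s,r_s}$ (or only $d_1,\dots,d_{s-1}$ in the case $r_s = 1$). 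The easy direction applied to $d_s$ shows that $d_s'$ is simple, so the inductive hypothesis yields $d'$-simplicity of $R'$. Applying Shamsuddin's theorem to the chain $R' \subset R = R'[Y_{s,r_s}]$ reduces the whole claim to showing that the equation
\begin{equation*}
d'(Z) = a_s Z + b_{s,r_s}
\end{equation*}
admits no solution $Z \in R'$.

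The main obstacle is this last non-solvability, since a priori $Z$ can depend on the ``foreign'' variables $Y_{k,l}$ with $k \neq s$, while the hypothesis on $d_s$ only excludes solutions in $S = K[X;Y_{s,1},\dots,Y_{s,r_s-1}]$. My plan is a separation-of-eigenvalues argument. Expand $Z = \sum_M c_M M$ as a polynomial in the foreign variables with coefficients $c_M \in S$, and examine a monomial $M$ of top total degree. Comparing coefficients of $M$ on the two sides forces $d_s'(c_M) = (a_s - \alpha_M) c_M$, where $\alpha_M = \sum m_{k,l} a_k$. The principal ideal $(c_M) \subseteq S$ is then $d_s'$-invariant, so by simplicity of $d_s'$ either $c_M = 0$ or $c_M \in K^{*}$ and $\alpha_M = a_s$. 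Peeling off these leading constant terms and absorbing the resulting lower-degree inhomogeneities into the right-hand side, one iterates; the pairwise distinctness of the $a_i$ controls the eigenvalue identities $\alpha_M = a_s$ at each stage. Eventually $Z$ must reduce to an element of $S$ satisfying $d_s'(Z) = a_s Z + b_{s,r_s}$, which is excluded by Shamsuddin's theorem applied to the simple derivation $d_s$. Carefully tracking the bookkeeping of these iterated absorptions is, in my view, the delicate heart of the argument.
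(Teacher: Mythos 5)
The paper itself does not prove this statement: it is quoted verbatim from Lequain (\cite{lequain}, Theorem 3.1), so there is no in-paper argument to compare yours with; I evaluate the proposal on its own terms. Your architecture is the standard one and the easy half is fine: the ideal-extension argument for (i)$\Rightarrow$(ii) is correct, and the same restriction argument (applied to $K[X;Y_{s,1},\dots,Y_{s,r_s-1}]\subset K[X;Y_{s,1},\dots,Y_{s,r_s}]$) does give simplicity of the truncation $d_s'$, even though $d_s'$ is not literally a canonical component of $d_s$. The reduction of (ii)$\Rightarrow$(i), by induction and Shamsuddin's theorem, to the non-solvability of $d'(Z)=a_sZ+b_{s,r_s}$ in $R'$ is also correct.

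That non-solvability, however, is where there is a genuine gap, and the specific claim you lean on is false: pairwise distinctness of the $a_i$ does \emph{not} control the identities $\alpha_M=a_s$. Take $a_1=X$, $a_2=2X$, $s=2$: the monomial $M=Y_{1,1}^2$ has $\alpha_M=2a_1=a_2$, so a top foreign monomial of degree $2$ survives your leading-coefficient analysis with $c_M\in K^{\star}$, and $Z$ does not reduce to an element of $S$. (Concretely, for $d=\partial_X+(XY_1+1)\partial_{Y_1}+(2XY_2+1)\partial_{Y_2}$ one must exclude $Z=cY_1^2+c_1Y_1+c_0$ with $c\in K^{\star}$, and the top-degree equation there is just $c'=0$.) The contradiction actually arrives one level \emph{below} the top: writing a surviving top monomial as $M=NY_{k,l}$, the coefficient equation at $N$ reads $(\,\cdot\,)'=a_k(\,\cdot\,)+\beta_N$, where pairwise distinctness of the $a_i$ forces all top monomials $NY_{k',l'}$ contributing to $\beta_N$ to come from the single component $k$, the eigenvalue $a_s-\alpha_N=a_k$ is nonzero because simplicity of $d_k$ forces $a_k\neq 0$, and $\beta_N$ is a nonzero $K$-linear combination of $b_{k,1},\dots,b_{k,r_k}$; Theorem \ref{teoremayves3.2}(a)(iii) for the component $k$ then forbids a solution. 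None of this mechanism appears in your sketch, and your ``iterate and absorb'' description points toward a descent that never terminates in ``$Z\in S$''. There is also a structural problem: since you expand only in the foreign variables, your coefficients lie in $S$, so the second-level equation you must contradict is $d_s'(c_N)=a_kc_N+\beta_N$ \emph{over $S$} --- whose non-solvability is itself an instance (via Shamsuddin) of the very local--global statement being proved, a circularity you would have to break by a further induction. The cleaner fix is to expand $Z$ in \emph{all} the $Y$-variables simultaneously, so that every coefficient equation is an ODE over $K[X]$ to which Theorem \ref{teoremayves3.2}(a)(iii) applies directly.
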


For this result to be useful, we need a criteria to decide if the i-th component of $d$, that is $d_i$,  is simple or not. This is given to  us by \cite{lequain}, Theorem 3.2. To enunciate it, we need  a Lemma and a definition.

\begin{lemma}[\cite{lequain}, Lemma 2.3]
Let $a,b \in K[X]$, $a\neq 0$. Consider the following sequences of equalities:

$\begin{array}{ccc}
b & = & a.q_1+r_1 \\
q_1^{\prime} & = & a.q_2+r_2\\
\vdots\\
q_t^{\prime} & = & a.0+r_{t+1},\\
\end{array}$
\vspace{3mm}

where $q_1, \dots,q_t,r_1,\dots,r_{t+1} \in K[X]$ and deg $r_i<$ deg $a$ for every $i$.

\begin{enumerate}
\item [(a)] The following statements are equivalent:
\subitem (i) The equation $Z^{\prime}=aZ+b$ has a solution in $K[X]$.
\subitem (ii) $\sum_{i=1}^{t+1} r_i=0$.
\item [(b)] If the equation  $Z^{\prime}=aZ+b$ has a solution $f \in K[X]$, then $f=-\sum_{i=1}^{t} q_i$.
\end{enumerate}
\end{lemma}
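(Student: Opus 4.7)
The plan is to work with the explicit candidate $f_0 := -\sum_{i=1}^{t} q_i$ and the $K$-linear operator $\phi(Z) := Z' - aZ$ on $K[X]$. I would prove by direct computation the master identity
\[ \phi(f_0) = b - \sum_{i=1}^{t+1} r_i, \]
from which both equivalences in (a) and the formula in (b) follow once uniqueness of polynomial solutions is established.

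First I would briefly justify that the sequence $\{q_i\}$ is finite: when $\deg a \geq 1$, the relation $q_{k-1}' = aq_k + r_k$ with $\deg r_k < \deg a$ forces $\deg q_k = \deg q_{k-1} - 1 - \deg a$, so degrees strictly drop until the quotient vanishes; when $\deg a = 0$ every remainder is zero and the $q_k$ simply lose one degree under differentiation. Next, using $aq_1 = b - r_1$, $aq_k = q_{k-1}' - r_k$ for $2 \leq k \leq t$, and the terminal relation $q_t' = r_{t+1}$, the sum $\sum_{k=1}^t aq_k$ telescopes into $b + \sum_{k=1}^{t-1} q_k' - \sum_{k=1}^{t} r_k$. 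Substituting into $\phi(f_0) = -\sum_{k=1}^{t} q_k' + \sum_{k=1}^{t} aq_k$ collapses everything to the displayed identity, which proves (ii) $\Rightarrow$ (i) and half of part (b).

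To close the loop I would prove that any polynomial solution of $\phi(Z) = b$ is unique, which reduces to showing that the homogeneous equation $g' = ag$ admits only $g = 0$ in $K[X]$. A nonzero $g$ of degree $m$ would force $m - 1 = m + \deg a$ by comparing degrees when $\deg a \geq 1$, which is absurd; when $\deg a = 0$, comparing leading coefficients yields the same contradiction. Hence if (i) holds, any solution $f$ must coincide with $f_0$, and the master identity then forces $\sum_{i=1}^{t+1} r_i = 0$, giving (i) $\Rightarrow$ (ii) and finishing (b). The only real obstacle is the bookkeeping of the telescoping and a few boundary cases (such as $t = 0$ when $\deg b < \deg a$, where $f_0 = 0$ and the identity reduces to $0 = b - r_1$); no new idea beyond these two observations is needed.
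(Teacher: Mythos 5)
The paper does not actually prove this lemma --- it is quoted verbatim from Lequain's article and used as a black box --- so there is no in-paper argument to compare against; I can only assess your proposal on its own terms. Your master identity $\phi(f_0)=b-\sum_{i=1}^{t+1}r_i$ for $f_0=-\sum_{i=1}^{t}q_i$ and $\phi(Z)=Z'-aZ$ is correct (the telescoping checks out, using $aq_1=b-r_1$, $aq_k=q_{k-1}'-r_k$ and $q_t'=r_{t+1}$), and it does give (ii)~$\Rightarrow$~(i). The uniqueness of solutions of $\phi(Z)=b$, via the degree comparison for $g'=ag$, is also fine.

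The gap is in your deduction of (i)~$\Rightarrow$~(ii). You write: ``if (i) holds, any solution $f$ must coincide with $f_0$,'' and you derive this from uniqueness. But uniqueness only says that $\phi(Z)=b$ has at most one solution; it does not identify that solution with $f_0$, because $f_0$ is only known to solve $\phi(Z)=b-\sum r_i$, and unless $\sum r_i=0$ (which is exactly what you are trying to prove) $f_0$ is not a solution of $\phi(Z)=b$ at all. Note that your argument for this direction never invokes the hypothesis $\deg r_i<\deg a$, and the implication is false without it (e.g.\ for $a=X$, $b=0$ one can write $0=X\cdot X+(-X^2)$, $q_1'=1=X\cdot 0+1$, so the ``remainders'' sum to $1-X^2\neq 0$ even though $Z=0$ solves the equation), so something essential is missing. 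The repair is short: from $\phi(f)=b$ and the master identity, $\phi(f-f_0)=\sum_{i=1}^{t+1}r_i$, which has degree $<\deg a$; but if $g:=f-f_0\neq 0$ and $\deg a\geq 1$ then $\deg\phi(g)=\deg g+\deg a\geq\deg a$, a contradiction, so $g=0$ and hence $\sum r_i=\phi(0)=0$ (and if $\deg a=0$ every $r_i$ vanishes by hypothesis, so (ii) is automatic). This simultaneously finishes (b). With that one extra degree comparison inserted, your proof is complete and correct.
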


\begin{definition} (\cite{lequain}, Definition 2.4)
 Let $a,b\in K[X], a\neq 0$, be two polynomials and $r_1,\dots,r_{t+1}$ the sequence of polynomials
defined in the previous lemma. The polynomial $\sum_{i=1}^{t+1}r_i$ will be denoted by $\mathcal{P}(a,b)$.
\end{definition}

\begin{observation} \label{observation} Note that, in the definition above, if $a\in K^{\star}$, then $\mathcal{P}(a,b)=0$.
\end{observation}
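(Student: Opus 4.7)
The plan is to unwind the defining sequence in the previous lemma under the hypothesis $a \in K^\star$ and use the degree constraint on the remainders. The whole observation reduces to a degree bookkeeping: the Euclidean divisions that produce $r_1,\dots,r_{t+1}$ are divisions by the nonzero constant $a$, which has $\deg a = 0$.

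More concretely, I would proceed as follows. First, I would invoke the defining property that $\deg r_i < \deg a$ for every $i = 1, \dots, t+1$. Since $a \in K^\star$ means $\deg a = 0$, the inequality $\deg r_i < 0$ forces $r_i = 0$ for each $i$ (adopting the usual convention that the zero polynomial has degree $-\infty$, or simply the negative of whatever is needed). Consequently each division in the sequence is exact: $q_i = a^{-1} q_{i-1}'$ (with $q_0 := b$), and the sequence terminates as soon as some iterated derivative of $b$ vanishes, which happens after finitely many steps since $b \in K[X]$.

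Finally, summing up, $\mathcal{P}(a,b) = \sum_{i=1}^{t+1} r_i = 0$, which is exactly the assertion of the Observation. The main obstacle, if one can call it that, is only to note that the termination index $t$ is well defined when $a$ is a nonzero constant, but this is immediate because $b^{(\deg b + 1)} = 0$ guarantees the sequence stops. No deeper structural argument is needed; the statement is a direct consequence of the inequality $\deg r_i < \deg a$ in the definition.
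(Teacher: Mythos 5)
Your argument is correct and is exactly the (implicit) justification behind the paper's Observation, which is stated without proof: since $\deg a = 0$, the condition $\deg r_i < \deg a$ forces every $r_i = 0$, hence $\mathcal{P}(a,b) = \sum_{i=1}^{t+1} r_i = 0$. Your remark about termination of the sequence (some iterated derivative of $b$ eventually vanishes) is a sensible extra check but not a point of divergence from the paper.
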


\begin{theorem} \label{teoremayves3.2} (\cite{lequain}, Theorem 3.2) Let $i \in\{1,\dots,s\}.$
\begin{enumerate}
\item [(a)] The following properties are equivalent:
\subitem (i) $d_i$ is a simple derivation of $K[X; Y_{i,1},\dots, Y_{i,r_i}]$.
\subitem (ii) $a_i\neq 0$ and the polynomials $\mathcal{P}(a_i,b_{i,1}),\dots,\mathcal{P}(a_i,b_{i,r_{i}})$ are $K-$linear independent.
\subitem (iii) For every $(k_1,\dots,k_{r_i})\in K^{r_i}\setminus\{(0,\dots,0)\}$, the equation \\ $Z^{\prime}=a_iZ+\sum_{j=1}^{r_i}k_jb_{i,j}$
does not have any solution in $K[X]$.
\item [(b)] We can always determine effectively whether property $(ii)$ is satisfied or not.
\end{enumerate}

\end{theorem}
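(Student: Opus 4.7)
The plan is to prove part (a) by the cycle (ii) $\Leftrightarrow$ (iii), (i) $\Rightarrow$ (iii), (iii) $\Rightarrow$ (i), and then to note part (b) as a computational observation.

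First, for (ii) $\Leftrightarrow$ (iii), the key is that the construction of $\mathcal{P}(a,b)$ in the preceding Lemma is $K$-linear in $b$: the successive quotients $q_1,\dots,q_t$ arise from Euclidean division by $a$ followed by differentiation, both of which are $K$-linear once the remainder-degree convention is fixed. Hence $\mathcal{P}(a_i,\sum_j k_j b_{i,j})=\sum_j k_j\mathcal{P}(a_i,b_{i,j})$, and by the Lemma the equation $Z' = a_i Z + \sum_j k_j b_{i,j}$ is solvable iff this linear combination vanishes. So (iii) is exactly the statement that the $\mathcal{P}(a_i,b_{i,j})$ admit no non-trivial $K$-linear relation; the condition $a_i\neq 0$ of (ii) is forced by (iii), since otherwise $Z' = b_{i,j}$ would be solved by any antiderivative.

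For (i) $\Rightarrow$ (iii), I argue by contrapositive. Given a solution $f\in K[X]$ of $f'=a_i f+\sum_j k_j b_{i,j}$ with $(k_j)\neq 0$, set $W=\sum_j k_j Y_{i,j}-f$. A direct computation gives $d_i(W) = \sum_j k_j(a_i Y_{i,j}+b_{i,j})-f' = a_i W$, so $(W)$ is a proper nonzero $d_i$-invariant ideal, showing that $d_i$ is not simple.

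The main direction is (iii) $\Rightarrow$ (i), which I would obtain by iterating Shamsuddin's theorem. Let $R_k=K[X;Y_{i,1},\dots,Y_{i,k}]$ with $R_0=K[X]$, and let $d_i^{(k)}$ denote the evident restriction of $d_i$. Proceeding by induction on $k$, the base case $R_0$ is $\partial_X$-simple, and to pass from $R_{k-1}$ to $R_k$ via Shamsuddin's theorem I must rule out a solution $f\in R_{k-1}$ of the equation $d_i^{(k-1)}(Z)=a_i Z+b_{i,k}$. Expanding $f=\sum_{\alpha} f_\alpha Y^{\alpha}$ with $f_\alpha\in K[X]$, let $m$ be the top total $Y$-degree; comparison of top-degree terms in the $Y$-grading yields $f_\alpha'=(1-m)a_i f_\alpha$ for $|\alpha|=m$. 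For $m\geq 2$ this contradicts degrees in $X$ (since $a_i\neq 0$ forces $\deg(a_i f_\alpha)>\deg f_\alpha'$), so $m\leq 1$; the case $m=1$ yields constants $c_j\in K$ and reduces to an $f_0\in K[X]$ satisfying $f_0'=a_i f_0+b_{i,k}-\sum_{j<k} c_j b_{i,j}$, while $m=0$ gives $f_0'=a_i f_0+b_{i,k}$; both contradict (iii) with $k_k=1\neq 0$. The induction therefore concludes $R_{r_i}$ is $d_i$-simple.

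Part (b) is an effectivity observation: each $\mathcal{P}(a_i,b_{i,j})$ is computed by a terminating sequence of polynomial divisions and differentiations, and $K$-linear independence of a finite list of polynomials is decidable by linear algebra. The main obstacle is precisely the $Y$-degree argument in the step (iii) $\Rightarrow$ (i), since (iii) only controls solutions lying in $K[X]$, whereas Shamsuddin's theorem requires us to rule out solutions in the larger rings $R_{k-1}$.
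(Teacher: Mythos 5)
This theorem is quoted by the paper from Lequain's article without proof, so there is no in-paper argument to compare against; judged on its own, your proof is correct. The three ingredients you use --- $K$-linearity of $b\mapsto\mathcal{P}(a,b)$ combined with Lemma 2.3 for (ii)$\Leftrightarrow$(iii), the invariant principal ideal $(\sum_j k_jY_{i,j}-f)$ for (i)$\Rightarrow$(iii), and the iterated application of Shamsuddin's criterion with the $Y$-degree bound $m\le 1$ reducing solvability in $R_{k-1}$ to solvability in $K[X]$ for (iii)$\Rightarrow$(i) --- are exactly the strategy of the cited source, and your handling of the degenerate cases ($a_i=0$, $a_i\in K^{\star}$, the $m=0$ and $m=1$ cases) is sound.
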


\section{The isotropy group of  simple Shamsuddin derivations}
Recall the following notation for the isotropy group of $d$: \newline Aut$(K[X;Y_1,\dots Y_n])_d=\{\rho \in \text{Aut}(K[X;Y_1,\dots Y_n])|\; \rho d \rho^{-1}=\text{id}\}.$
\begin{prop} \label{prop1}
Let  $d=\partial_X+\sum_{j=1}^{n}(a_iY_{i}+b_{i})\partial_{i}$ be a  simple Shamsuddin \\ derivation of the polynomial ring  $K[X;Y_1,\dots Y_n]$. If $\rho\in \text{Aut}(K[X; Y_1,\dots Y_n])_d $, then $\rho(X)= X+\alpha$, for some $\alpha \in K$.
\end{prop}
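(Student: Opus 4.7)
My plan is to apply the conjugation identity $\rho d = d \rho$ to the single element $X$ and thereby reduce the proposition to a statement about the ring of constants of $d$. Since $d(X)=1$ and $\rho(1)=1$ (as $\rho$ is a $K$-algebra automorphism), we obtain $d(\rho(X))=\rho(d(X))=1=d(X)$, so the element $Q := \rho(X)-X$ lies in $\ker d$. The proposition therefore reduces to showing $\ker d \subseteq K$.

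The key step is then to verify that $\ker d = K$; this is a standard consequence of $d$-simplicity alone and does not invoke the Shamsuddin structure. Indeed, if $f \in \ker d$ is nonzero, the Leibniz rule gives $d(rf)=d(r)f+rd(f)=d(r)f \in (f)$ for every $r \in K[X;Y_1,\dots,Y_n]$, so the principal ideal $(f)$ is a nonzero $d$-stable ideal. By $d$-simplicity, $(f)=K[X;Y_1,\dots,Y_n]$, hence $f$ is a unit of the polynomial ring. Since the units of $K[X;Y_1,\dots,Y_n]$ are exactly $K^{\star}$, this forces $f \in K^{\star}$; combined with the obvious inclusion $K \subseteq \ker d$ (valid because $d$ is $K$-linear), we conclude $\ker d = K$. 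Therefore $Q=\alpha$ for some $\alpha \in K$, that is, $\rho(X)=X+\alpha$, as claimed.

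I do not anticipate any genuine obstacle here: indeed, the Shamsuddin hypothesis is not actually used — only the simplicity of $d$ and the triviality of the unit group of the polynomial ring enter. The heavy Lequain machinery recalled in Section~2 will presumably come into play only in the subsequent results, where one must control how $\rho$ acts on the variables $Y_1,\dots,Y_n$; at that later stage the linear independence of the polynomials $\mathcal{P}(a_i,b_{i,j})$ will be the decisive tool, whereas Proposition~\ref{prop1} is essentially a formal consequence of $d$-simplicity.
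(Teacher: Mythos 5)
Your proof is correct, and it takes a genuinely different route from the paper's. The paper argues variable by variable: it writes $\rho(X)=c_tY_i^t+\cdots+c_0$, compares the coefficient of $Y_i^t$ in $1=d(\rho(X))$ to get $d(c_t)=-ta_ic_t$, observes that $(c_t)$ is then a $d$-ideal, and uses simplicity (via the fact that $a_i\neq 0$ for a simple Shamsuddin derivation) to force $t=0$; only afterwards does it integrate $\rho'(X)=1$ to get $\rho(X)=X+\alpha$. You instead note immediately that $\rho(X)-X\in\ker d$ and invoke the standard fact that the ring of constants of a simple derivation of a polynomial ring is $K$: a nonzero constant generates a nonzero $d$-stable principal ideal, hence equals the whole ring, hence the constant is a unit and lies in $K^{\star}$. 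Both proofs ultimately rest on the same mechanism (a principal ideal shown to be $d$-stable plus the triviality of the unit group), but yours applies it once, globally, rather than to each leading coefficient, and so dispenses with the Shamsuddin form and with Lequain's criterion entirely. What you gain is a shorter argument and a strictly more general statement — the conclusion holds for any simple derivation of $K[X;Y_1,\dots,Y_n]$ with $d(X)=1$, not just Shamsuddin ones; what the paper's version buys is uniformity with the coefficient-comparison computations that it must carry out anyway in the proof of Theorem \ref{teorema1}, where the Lequain machinery really is indispensable. Your assessment of where that machinery enters is accurate.
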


\begin{proof}[Proof]
Let us first show that $\rho(X)$ depends only on $X$. Write $\rho(X)=c_tY_i^t+\cdots + c_1Y_i+c_0$, for some $t \in \mathbb{N}$, $c_k\in K[X,Y_1, \dots , Y_{i-1}, Y_{i+1}, \dots , Y_n]$, $c_t\neq 0$.

Since $1=\rho (d(X))= d(\rho(X))$, then
$$
 1=d(\rho(X))=d(c_tY_i^t+\cdots + c_1Y_i+c_0)=(d(c_t)+ta_ic_t)Y^t_i+ \cdots$$
(terms of lower degree  in $Y_i$).

If $t \geq 1$, then $ d(c_t)+ta_ic_t=0$, that is, $(c_t)$ is a $d$-ideal. But this implies that $c_t=0$, because, as $d$ is simple, $a_i \neq 0$. Therefore $t=0$ and $\rho(X)$ does not depend on $Y_i$, for every $i$. Then $\rho(X)\in K[X]$.

But then, $1=\rho (d(X))= d(\rho(X))=\rho'(X)$. Thus  $\rho(X)=X+\alpha$ for some $\alpha \in K$.
\end{proof}

Let us write the polynomial ring $K[X;Y_1,\dots, Y_n]$ as
$K[X;\underline{Y_1},\dots,\underline{Y_s}]$ where
$\underline{Y_i}=\{Y_{i,1},\dots,Y_{i,r_i}\}$. Accordingly, the Shamsuddin  derivation $d$ has a canonical form $\partial_X+\sum_{i=1}^s\sum_{j=1}^{r_i}(a_iY_{i,j}+b_{i,j})\partial_{i,j}$, where $a_i,b_{i,j}\in K[X]$ and for every \\ $i\neq k, a_i\neq a_k$. To simplify even further the notation we denote the polynomial ring $K[X;\underline{Y_1},\dots,\underline{Y_s}]$ simply by $K[X;\{Y_{i,j}\}]$. Recall that each  derivation $d_i=\partial_X+ \sum_{j=1}^{r_i}(a_iY_{i,j}+b_{i,j})\partial_{i,j}$ of the polynomial ring  $K[X;Y_{i,1},\dots,Y_{i,r_i}]$ is called the i-th {\it canonical component} of $d$.

\begin{theorem} \label{teorema1}
If $d$ is a simple Shamsuddin derivation of the polynomial ring
$K[X;Y_1,\dots, Y_n], n\geq 1$, then its isotropy group is trivial.
\end{theorem}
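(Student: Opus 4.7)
The plan is to refine Proposition \ref{prop1}, which already gives $\rho(X) = X + \alpha$, by showing successively that $f_{i,j} := \rho(Y_{i,j})$ is (a) at most affine in the $Y$-variables, (b) involves only $Y$'s from a single canonical block, and (c) is in fact equal to $Y_{i,j}$. The two essential inputs are the Local-Global Principle (Theorem \ref{teoremayves3.1}), which transports simplicity to every canonical component $d_k$, and the non-existence criterion of Theorem \ref{teoremayves3.2}(iii), which prohibits polynomial solutions of $Z' = a_k Z + \sum_l \gamma_l b_{k,l}$ whenever $(\gamma_l) \neq 0$. Combining Observation \ref{observation} with Theorem \ref{teoremayves3.2}(ii) also yields that each $a_i$ is a non-constant polynomial; a $\deg_X$-comparison then makes $Z' = QZ$ ($Q \in K[X] \setminus \{0\}$) have only the zero polynomial solution, a fact I will use several times.

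Writing $f_{i,j} = \sum_\mathbf{e} c_\mathbf{e}(X) Y^\mathbf{e}$, the identity $d(f_{i,j}) = a_i(X+\alpha)\, f_{i,j} + b_{i,j}(X+\alpha)$ yields, for each $\mathbf{e} \neq 0$,
\begin{equation*}
c_\mathbf{e}' = \bigl(a_i(X+\alpha) - A_\mathbf{e}\bigr)\, c_\mathbf{e} \; - \; \sum_{k,l}(e_{k,l}+1)\, b_{k,l}\, c_{\mathbf{e} + \mathbf{1}_{k,l}},
\end{equation*}
where $A_\mathbf{e} := \sum_k \bigl(\sum_l e_{k,l}\bigr) a_k$ and $\mathbf{1}_{k,l}$ is the multi-index supported at $(k,l)$. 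For a monomial $\mathbf{e}_L$ of maximal $Y$-total-degree $d$ the sum vanishes, and the degree argument forces $c_{\mathbf{e}_L} \in K^\star$ and $a_i(X+\alpha) = A_{\mathbf{e}_L}$. To rule out $d \geq 2$ I descend one degree: choose any $(k, l')$ with $(\mathbf{e}_L)_{k,l'} > 0$ and set $\mathbf{e} := \mathbf{e}_L - \mathbf{1}_{k,l'}$. In the recursion at this $\mathbf{e}$, a term $c_{\mathbf{e}+\mathbf{1}_{k'',l''}}$ can be nonzero only when $a_{k''} = a_i(X+\alpha) - A_\mathbf{e}$, and distinctness of the $a_{k''}$ isolates $k'' = k$; the equation then takes the form $c_\mathbf{e}' = a_k c_\mathbf{e} + \sum_{l''} \gamma_{l''} b_{k,l''}$ with $\gamma_{l''} \in K$. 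Since $c_\mathbf{e} \in K[X]$, Theorem \ref{teoremayves3.2}(iii) applied to the simple $d_k$ forces every $\gamma_{l''} = 0$, whence $c_{\mathbf{e}_L} = 0$, a contradiction. Hence $f_{i,j} = \sum_{k,l} \lambda_{i,j;k,l}(X)\, Y_{k,l} + \mu_{i,j}(X)$.

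Matching $Y_{k,l}$-coefficients gives $\lambda_{i,j;k,l}' = (a_i(X+\alpha) - a_k)\, \lambda_{i,j;k,l}$, so each $\lambda_{i,j;k,l}$ is either $0$ or lies in $K^\star$ with $a_k = a_i(X+\alpha)$. Because the $a_k$ are distinct, for each $i$ there is at most one such $k =: S(i)$; since the Jacobian matrix $\Lambda := (\lambda_{i,j;k,l})$ is constant and must lie in $\mathrm{GL}_n(K)$ for $\rho$ to be an automorphism, its block pattern prescribed by $S$ is invertible only if $S$ is a bijection of $\{1,\dots,s\}$ with $r_{S(i)} = r_i$. Iterating, $a_{S^m(i)}(X) = a_i(X + m\alpha)$, so for $m$ the order of $S$ we get $a_i(X + m\alpha) = a_i$; as $a_i$ is a nonconstant polynomial in characteristic zero, $\alpha = 0$ and hence $S = \mathrm{id}$. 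Matching $Y$-constants then gives $\mu_{i,j}' = a_i \mu_{i,j} + \sum_l \kappa_l b_{i,l}$ with $\kappa_j = 1 - \lambda_{i,j;i,j}$ and $\kappa_l = -\lambda_{i,j;i,l}$ for $l \neq j$; Theorem \ref{teoremayves3.2}(iii) forces $(\kappa_l) = 0$ and then $\mu_{i,j}' = a_i \mu_{i,j}$ forces $\mu_{i,j} = 0$, yielding $\rho = \mathrm{id}$. I expect the $Y$-degree reduction to be the main obstacle: the leading-part equation alone leaves substantial slack, since $a_i(X+\alpha) = \sum_k d_k a_k$ with $\sum d_k \geq 2$ can a priori hold (e.g.\ if some $a_k$ is a positive integer multiple of $a_i$); the decisive step is to descend one degree, exploit distinctness of the $a_k$ to single out a unique relevant component, and reduce to the sharp non-existence statement of Theorem \ref{teoremayves3.2}(iii).
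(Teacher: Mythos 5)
Your proof is correct, and while it shares the paper's overall architecture --- Proposition \ref{prop1} for $\rho(X)=X+\alpha$, reduction of each $\rho(Y_{i,j})$ to an expression affine in the $Y$'s with constant linear part, and then Theorem \ref{teoremayves3.2}(a)(iii) to kill the off-diagonal coefficients and the $K[X]$-part --- the two hardest steps are carried out by genuinely different mechanisms. The paper expands $\rho(Y)$ one variable $W$ at a time with coefficients in the ring generated by the remaining variables, gets the leading coefficient in $K^{\star}$ directly from $d$-simplicity (it generates a $d$-ideal), and obtains both $\deg_W\rho(Y)=1$ and $\alpha=0$ from the symmetric relations $\rho(a)=na'$ and $\rho(a')=ma$ applied to $\rho$ and $\rho^{-1}$, which force $mn=1$ and $a'(X+\alpha)=a'(X-\alpha)$. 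You instead expand in all the $Y$'s simultaneously with coefficients in $K[X]$, so a bare $X$-degree count replaces the simplicity argument for the top coefficients; you exclude total degree at least $2$ by descending to the subleading multi-indices, using distinctness of the $a_k$ to isolate a single component and invoking Theorem \ref{teoremayves3.2}(a)(iii) already at that stage; and you get $\alpha=0$ from the finite order of the permutation $S$ of components induced by the (necessarily invertible, by the Jacobian) constant linear part. Your route is longer at the degree-reduction step, but it is also more self-contained: it never compares $\rho$ with $\rho^{-1}$, and the Jacobian argument makes explicit why each $\rho(Y_{i,j})$ must involve some $Y$-variable and why the induced map on components is a bijection --- points that are only implicit in the paper's treatment (the paper tacitly assumes $\deg_Y\rho(W)\geq 1$ when it writes $\rho(a')=ma$). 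Both proofs ultimately rest on the same external inputs: the Local-Global Principle, Theorem \ref{teoremayves3.2}(a)(iii), and the observation that each $a_i$ is nonconstant.
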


\begin{proof}[Proof]
Let $Y$ and $W$ be any of the variables $Y_{i,j}$  for  $i=1,\dots,s$ and $ j=1,\dots,r_i$, including the case $W=Y$. We want to analyze the deg$_W \rho(Y)$. To simplify the notation
we put: $d(Y)=aY+b, d(W)=a'W+b'$ with $a,b,a',b' \in K[X]$.
We  write,
$$
 \rho(Y)=c_n W^n+c_{n-1}W^{n-1}+\cdots+c_0,$$
 with $c_k\in K[X,\{Y_{i,j}\}\setminus\{W\}], c_n\neq 0,$ so $n=$deg$_W \rho(Y)$.

 From $\rho(d(Y))=d(\rho(Y))$, we get $$\rho(a)\rho(Y)+\rho(b)=d(c_nW^n+c_{n-1}W^{n-1}+\cdots+c_0).$$

 Let us suppose that $n\geq 1$ and look for a contradiction, if $Y\neq W$. Comparing the coefficients of $W^n$ above and  taking into account that, by Proposition \ref{prop1},
  $\rho(a),\rho(b) \in K[X]$, and that $d(c_k)\in K[X,\{Y_{i,j}\}\setminus\{W\}]$, since $d$ is a Shamsuddin derivation, we have that
  $$
   \rho(a)c_n=d(c_n)+na'c_n \Rightarrow d(c_n)=(\rho(a)-na')c_n.
  $$

  As $d$ is simple, $c_n \in K^{\star}$ and  $\rho(a)=na'$. Changing the role of the variables $Y$ and $W$, we also have that
$\rho(a')=ma$, if $m:=$deg$_{Y}\rho(W) \geq 1$. Then, by Proposition \ref{prop1},
$$a'(X+\alpha) = \rho(a')= ma=m(\rho^{-1}(\rho(a)))=m(\rho^{-1}(na'))=mna'(X-\alpha) .$$
Since $a'\neq 0, m=n=1$. Then $a'(X+\alpha)=a'(X-\alpha)$. As
deg $a' \geq 1$, by Observation \ref{observation} and Theorem \ref{teoremayves3.2} (a) (ii), $\alpha=0$. But now $a'=\rho(a)=a$.

Note that we  just proved three things: $\rho(X)=X$; $Y$ and $W$ belong to the same component of $d$ (since $a=a'$) and deg$_W\rho(Y)=$ deg$_Y\rho(W)=1$.

Let us suppose, without loss of generality, that $Y$ and $W$ belong to the first component, say $Y,W \in \{Y_{1,1}, \dots, Y_{1,r_{1}}\}$. To simplify the notation we put
\\ $Y_1:= Y_{1,1}, \dots, Y_r:=Y_{1,r_{1}}$ and $d(Y_i)=aY_1+b_i$, $a,b_i\in K[X]$. Then, by the previous argument,
$$\rho(Y_1)=c_1Y_1+\dots+c_rY_r+c_0 $$ with $c_1,\dots, c_r \in K^{\star}$ and $c_0\in K[X]$.

If we impose the condition $d(\rho(Y_1))=\rho(d(Y_1)))$ again we have,
$$c_1d(Y_1)+ \cdots + c_r(Y_r)+ c_0^{\prime}=a(c_1Y_1+\cdots+c_rY_r+c_0)+b.  $$
Substituting $d(Y_i)=aY_i+b_i$ and canceling $ac_iY_i$ on both sides we get
\begin{equation}
c_0^{\prime}=ac_0-(c_1-1)b_1-\cdots -c_rb_r.
\label{equation1}
\end{equation}

If $r\geq 2$, this contradicts Theorem \ref{teoremayves3.2} (a)(iii), since $c_r\neq 0$. Therefore deg$_{Y_j}(Y_1)=0$ if $j=2,\dots,r$. That means that  $\rho(Y_1)$ does not depend on any of the variables $Y_2, \dots, Y_r$. Besides, $\rho(Y_1)=c_1Y_1+c_0, c_1\in K^{\star;}$ and $c_0\in K[X]$.

But now equation (\ref{equation1}) reads
$$c_0^{\prime}=ac_0-(c_1-1)b_1.$$ By Theorem \ref{teoremayves3.2} (a)(iii) again, $c_1=1$ and then $c_0=0$, since $a\neq 0.$
 Thus $\rho(Y_1)=Y_1$. As we already proved that $\rho(X)=X$, we have that $\rho=$ id.

\end{proof}

\section{Quadratic and cubic derivations}
 In this section, we study the isotropy group of some  derivations in two variables that are not  Shamsuddin derivations. Note that, in general, we will not suppose that the derivation is simple.

 \begin{definition}
  Let $K[X,Y]$ be the polynomial ring in two variables over a field $K$ of characteristic zero. A derivation $d$ of $K[X,Y]$ has $Y$-degree $n$ if $d(X)=1$ and $d(Y)$ has degree $n$ as a polynomial in $Y$ with coefficients in $K[X]$.
 \end{definition}

 Therefore, a derivation in two variables of $Y$-degree $n$ has the following form
 $$d=\partial_X+(h_0+h_1Y+\cdots + h_nY^n) \,\partial_Y, $$
  with $h_i \in K[X], h_n \neq 0$.

A derivations of $Y$-degree 2 (respectively 3) is called a {\it quadratic derivation in two variables} (respectively a {\it cubic derivation in two variables}).

{\bf Remark:} Our quadratic derivations differ a little  (and are more general)  from those studied by Maciejewski, Moulin-Ollagnier and Nowicki in \cite{nowickietal}. In their case, $d(Y)=Y^2+a(X)Y+b(X)$ is a monic polynomial in $Y$. They studied the simplicity of these derivations. Nowicki also proved that there are simple derivations in two variables with an arbitrarily large $Y$-degree (see \cite{nowicki}).

 \begin{theorem} \label{twovariables} Let $d$ be a derivation of the polynomial ring in two variables $K[X,Y]$ of $Y$-degree $n\geq 2$.  Let $\rho \in Aut(K[X,Y])_d$. Then,
  \begin{enumerate}
  \item [(i)]  $\rho(X)=X$ and $\rho(Y)=b_0+b_1Y$ with $b_0\in K[X], b_1\in K^{\star}$  and $b_1$ satisfies \\ $b^{n-1}_1=1$.
  \item[(ii)] If $b_1=1$, then $b_0=0$. In this case $\rho=$id.
  \item[(iii)] Let $d(Y)=h(X,Y)=h_0+h_1Y+\cdots + h_nY^n$ with $h_i \in K[X], h_n \neq 0$. If $h_0\neq 0$ and $b_0=0$, then $b_1=1$. In this case $\rho=$id.
  \item[(iv)] If $d$ is simple and $b_0=0$, then $b_1=1$. In this case $\rho=$id.
  \end{enumerate}
  \end{theorem}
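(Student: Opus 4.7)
The plan is a two-stage strategy. First I would establish part (i), which pins down the shape of any $\rho\in\mathrm{Aut}(K[X,Y])_d$; once that is in hand, parts (ii)--(iv) will follow by expanding the commutation relation $d(\rho(Y))=\rho(d(Y))$ as a polynomial identity in $Y$ and matching coefficients.

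For part (i), I mirror the opening argument of Proposition \ref{prop1}. Writing $\rho(X)=\sum_k c_k(X)Y^k$ with top coefficient $c_t\ne 0$: if $t\ge 1$, then the term of highest $Y$-degree in $d(\rho(X))$ is $t\,c_t\,h_n\,Y^{t+n-1}$, which is nonzero (since $\mathrm{char}(K)=0$, $n\ge 2$, $c_t\ne 0$, $h_n\ne 0$) and has $Y$-degree $\ge 2$, contradicting $d(\rho(X))=1$. Hence $\rho(X)\in K[X]$, and $\rho(X)'=1$ gives $\rho(X)=X+\alpha$ for some $\alpha\in K$. Next, writing $\rho(Y)=\sum_k b_k(X)Y^k$ with leading index $m\ge 1$ (forced by surjectivity of $\rho$), the identity $d(\rho(Y))=\rho(h(X,Y))$ forces matching top $Y$-degrees: the left has $Y$-degree $m+n-1$, the right has $Y$-degree $nm$, so $(n-1)(m-1)=0$; with $n\ge 2$ this gives $m=1$, whence $\rho(Y)=b_0(X)+b_1(X)Y$. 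Matching the coefficient of $Y^n$ then yields $b_1\,h_n(X)=b_1^n\,h_n(X+\alpha)$; comparing $X$-degrees forces $b_1\in K^\star$ and $b_1^{n-1}=1$, after which $h_n(X+\alpha)=h_n(X)$ yields $\alpha=0$.

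With (i) in place, $b_1\in K^\star$ so $b_1'=0$, and the conjugation identity reduces to the polynomial equation
\[
h(X,b_0+b_1 Y)=b_0'+b_1\,h(X,Y)
\]
in $K[X,Y]$. Expanding the left side by the binomial theorem and matching coefficients of $Y^k$ gives a workable system among $b_0,b_1$ and the $h_j$. For (ii), taking $b_1=1$ and matching coefficients of $Y^{n-1}$ yields $h_{n-1}+n\,h_n\,b_0=h_{n-1}$, hence $b_0=0$ (using $h_n\ne 0$ and $\mathrm{char}(K)=0$), so $\rho=\mathrm{id}$. For (iii), taking $b_0=0$ collapses the identity to $h_k(b_1^k-b_1)=0$ for every $k$; the case $k=0$ reads $h_0(1-b_1)=0$, and $h_0\ne 0$ forces $b_1=1$, whereupon (ii) gives $\rho=\mathrm{id}$. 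For (iv), the same relation at $k=0$ forces $h_0=0$ whenever $b_1\ne 1$; then $Y\mid d(Y)$, so $(Y)$ is a proper nonzero $d$-invariant ideal, contradicting simplicity of $d$, and again $b_1=1$ and $\rho=\mathrm{id}$.

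The most delicate step I expect to encounter is the final deduction $\alpha=0$ in part (i). The implication $h_n(X+\alpha)=h_n(X)\Rightarrow\alpha=0$ is immediate when $\deg_X h_n\ge 1$, but when $h_n$ is a nonzero constant this relation is vacuous in $\alpha$ and one must descend to the first lower index $k<n$ at which $h_k$ depends genuinely on $X$, match the coefficient of $Y^k$ in the conjugation identity, and iterate; this requires careful bookkeeping of the powers of $b_1$ appearing in the binomial expansion, and is the only point in the argument where plain $Y$-degree counting does not suffice.
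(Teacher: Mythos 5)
Your proposal retraces the paper's proof essentially step by step: the same extraction of the coefficient of $Y^{n+s-1}$ to force $\rho(X)\in K[X]$, the same top-degree count $nm=m+n-1$ giving $\rho(Y)=b_0+b_1Y$ with $b_1\in K^\star$ and $b_1^{n-1}=1$, the same coefficient of $Y^{n-1}$ for part (ii), and the same constant-term identity (\ref{eq1}) for parts (iii) and (iv). All of those computations are correct, and (ii)--(iv) are complete as you describe them.

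The one step you leave open is, however, a genuine gap, and your proposed repair does not close it. From $b_1h_n(X)=b_1^{\,n}h_n(X+\alpha)$ you correctly get $b_1\in K^\star$, $b_1^{n-1}=1$ and $h_n(X+\alpha)=h_n(X)$, and you correctly observe that this says nothing about $\alpha$ when $h_n\in K^\star$. (The paper simply asserts ``$t=1$, $b_t^{n-1}=1$ and $\alpha=0$'' at exactly this point, so its proof silently commits the error you flag.) But the descent to the first non-constant $h_k$ cannot work in general, because the conclusion $\rho(X)=X$ is false when every $h_k$ is constant: for $d=\partial_X+Y^2\partial_Y$ the translation $\rho:(X,Y)\mapsto(X+\alpha,Y)$ satisfies $\rho d\rho^{-1}=d$ for every $\alpha\in K$, so part (i) fails and the isotropy group is infinite (whence Corollary \ref{corollarycyclic} also fails for this $d$). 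Even when some $h_k\notin K$, the coefficient of $Y^k$ in $h(X+\alpha,b_0+b_1Y)=b_0'+b_1h(X,Y)$ involves the unknown polynomial $b_0$ through the binomial expansions of $(b_0+b_1Y)^j$ for $j>k$, so the descent is not a self-contained induction in $\alpha$ alone; one must solve for $\alpha$ and $b_0$ simultaneously. In short: your argument fully establishes $\rho(X)=X+\alpha$, $\rho(Y)=b_0+b_1Y$ with $b_1^{n-1}=1$, and parts (ii)--(iv); the claim $\alpha=0$ requires an additional hypothesis (for instance that some $h_k$ genuinely depends on $X$, or simplicity of $d$, exploited explicitly), and this is precisely the point at which both your sketch and the published proof are incomplete.
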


 \begin{proof}[Proof]
 \begin{enumerate}

\item[(i)] Let us write  $\rho(X)=a_0+a_1Y+\cdots +a_sY^s$ with $a_i \in K[X], a_s \neq 0$. We will show first that $s=0$.

 Since $d(X)=1$ and $\rho(d(X))=d(\rho(X))$ we have,
 $$1=d(a_0+a_1Y+\cdots +a_sY^s) =$$
 $$a_0^{\prime} +a_1^{\prime}Y+\cdots +a_s^{\prime}Y^s +a_1d(Y)+\cdots +sa_sY^{s-1}d(Y).$$
 Substituting $d(Y)$ for $h_0+\cdots+ h_nY^n$ and collecting the coefficient of $Y^{n+s-1}$ on both sides of the above equation we get $sa_sh_n=0$, since  $n+s-1\geq s+1\geq 1$. As $a_sh_n \neq 0$, then $s=0$.

 Therefore $\rho(X)\in K[X]$. Imposing the condition $\rho(d(X))=d(\rho(X))$ again we have $1=\rho^{\prime}(X)$. Then  $\rho(X)=X+\alpha$, for some $\alpha \in K$.

 Let us write now $\rho(Y)=b_0+b_1Y+\cdots +b_tY^t$
 with $b_i \in K[X], b_t \neq 0$. Then $\rho(b_i(X))=b_i(X+\alpha)$. Thus,
 $$\rho(d(Y))=h_0(X+\alpha)+h_1(X+\alpha)(b_0+\cdots +b_tY^t)+\cdots +h_n(X+\alpha)(b_0+\cdots +b_tY^t)^n. $$
 On the other hand,
 $$d(\rho(Y))=b_0^{\prime} +b_1^{\prime}Y+\cdots +b_t^{\prime}Y^t +b_1d(Y)+\cdots +tb_tY^{t-1}d(Y). $$
Since $\rho(d(Y))=d(\rho(Y))$, substituting $d(Y)$ for $h_0+\cdots+ h_nY^n$ and  \\ comparing the terms of higher degree in $Y$ we get
$$h_n(X+\alpha)b_t^nY^{nt}=tb_th_n(X)Y^{n+t-1} .$$
Then $t=1, b_t^{n-1}=1$ and $\alpha=0$.

Therefore $\rho(X)=X$ and $\rho(Y)=b_0+b_1Y$ with $b_0\in K[X], b_1\in K^{\star}$ and $b_1$ satisfies $b^{n-1}_1=1$.

\item[(ii):] We impose again the condition $\rho(d(Y))=d(\rho(Y))$, but now, by (i) and the hypothesis, $\rho(Y)=b_0+Y$. Looking at the coefficient of degree $n-1$ in $Y$ we have $nh_nb_0=0$. This implies that $b_0=0$, since $nh_n\neq 0$. Then $\rho=$id.

\item[(iii):] Now we look at the constant term in $\rho(d(Y))=d(\rho(Y))$. We have
\begin{equation} \label{eq1}
h_nb_0^{n}+h_{n-1}b_0^{n-1}+\cdots +h_1b_0+h_0(1-b_1)=b^{\prime}_0.
\end{equation}
If  $h_0\neq 0$ and $b_0=0$, then $b_1=1$ and  $\rho=$id.

\item[(iv):] If $d$ is simple, then $h_0\neq 0$, otherwise $(Y)$ would be a $d$-ideal. Now it follows from (iii) above.
\end{enumerate}
\end{proof}

\begin{corollary} \label{corollarycyclic}
Let $d$ be a derivation in two variables of $Y$-degree $n\geq 2$. Let $\mu_{n-1}(K)$ denote the cyclic group of $n-1$ roots of unity in $K$. Then  the isotropy group of $d$ is a subgroup of $\mu_{n-1}(K)$. In particular, it is a finite cyclic group.
\end{corollary}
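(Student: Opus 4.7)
The plan is to exploit Theorem \ref{twovariables} to define an injective group homomorphism from $\text{Aut}(K[X,Y])_d$ into $\mu_{n-1}(K)$; the conclusion is then immediate since every subgroup of a finite cyclic group is finite cyclic.

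First, by Theorem \ref{twovariables}(i), every $\rho\in\text{Aut}(K[X,Y])_d$ has the form $\rho(X)=X$ and $\rho(Y)=b_0(\rho)+b_1(\rho)Y$ with $b_0(\rho)\in K[X]$, $b_1(\rho)\in K^{\star}$, and $b_1(\rho)^{n-1}=1$. I would therefore define
\[
\Phi\colon\text{Aut}(K[X,Y])_d\longrightarrow\mu_{n-1}(K),\qquad \Phi(\rho)=b_1(\rho).
\]

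Next, I would verify that $\Phi$ is a group homomorphism. Given $\rho_1,\rho_2$ in the isotropy group, using $\rho_1(X)=X$ (so $\rho_1$ fixes $K[X]$) one computes
\[
(\rho_1\rho_2)(Y)=\rho_1\bigl(b_0(\rho_2)+b_1(\rho_2)Y\bigr)=b_0(\rho_2)+b_1(\rho_2)\bigl(b_0(\rho_1)+b_1(\rho_1)Y\bigr),
\]
whose leading coefficient in $Y$ is $b_1(\rho_1)\,b_1(\rho_2)$. Hence $\Phi(\rho_1\rho_2)=\Phi(\rho_1)\Phi(\rho_2)$.

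Finally, I would show $\Phi$ is injective. If $\Phi(\rho)=1$, i.e.\ $b_1(\rho)=1$, then Theorem \ref{twovariables}(ii) forces $b_0(\rho)=0$, so $\rho=\text{id}$. Thus $\text{Aut}(K[X,Y])_d$ embeds into the cyclic group $\mu_{n-1}(K)$ of order $n-1$, and is therefore itself finite cyclic of order dividing $n-1$.

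The argument is essentially a bookkeeping consequence of Theorem \ref{twovariables}; the only point requiring any care is the homomorphism check, where one must observe that $\rho_1$ acts trivially on the coefficients $b_0(\rho_2), b_1(\rho_2)\in K[X]$ because $\rho_1(X)=X$. No step presents a genuine obstacle.
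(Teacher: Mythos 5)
Your proposal is correct and follows exactly the paper's argument: map $\rho$ to the leading coefficient $b_1$, use Theorem \ref{twovariables}(i) for well-definedness and (ii) for injectivity. The only difference is that you spell out the homomorphism verification (using $\rho_1(X)=X$ to see that $\rho_1$ fixes the coefficients in $K[X]$), which the paper merely asserts.
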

\begin{proof}[Proof]
Consider the map $\varphi:K[X,Y]_d \rightarrow \mu_{n-1}(K)$ given by $\varphi(\rho)=b_1$ where $\rho(Y)=b_0+b_1Y$. It is a group homomorphisms. By theorem \ref{twovariables} (i), it is well defined; by (ii) it is injective. Then the result follows.
\end{proof}

\begin{corollary} Let $d$ be a derivation in two variables of $Y$-degree $n\geq 2$ over a field $K$. Suppose that $K \subseteq \mathbb{R}$. Then the isotropy group of $d$ is either trivial or cyclic with two elements.
\end{corollary}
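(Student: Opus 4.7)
The strategy is short and relies entirely on the preceding corollary. The plan is first to invoke Corollary \ref{corollarycyclic}, which provides an injective group homomorphism $\varphi$ from the isotropy group $\mathrm{Aut}(K[X,Y])_d$ into the cyclic group $\mu_{n-1}(K)$ of $(n-1)$-th roots of unity of $K$. Consequently it suffices to determine $\mu_{n-1}(K)$ under the additional hypothesis $K \subseteq \mathbb{R}$.

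Next I would invoke the elementary fact that the only roots of unity lying in $\mathbb{R}$ are $\pm 1$: any $b \in \mathbb{R}$ satisfying $b^{m}=1$ has $|b|=1$, so $b \in \{1,-1\}$. Hence $\mu_{n-1}(K) \subseteq \{1,-1\}$. One can even be more precise: $-1 \in \mu_{n-1}(K)$ if and only if $n-1$ is even, so $\mu_{n-1}(K)=\{1\}$ when $n$ is even and $\mu_{n-1}(K)=\{1,-1\}$ when $n$ is odd. (The extra precision is not needed for the statement of the corollary, but it clarifies exactly when the two-element case can occur.)

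Finally, since the isotropy group embeds into a subgroup of $\{1,-1\}$, and every subgroup of $\{1,-1\}$ is either trivial or the whole cyclic group of order two, the conclusion follows. There is no substantive obstacle here, as all the structural work has already been done in Theorem \ref{twovariables} and Corollary \ref{corollarycyclic}; the only content particular to this corollary is the characterization of real roots of unity.
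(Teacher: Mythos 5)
Your proposal is correct and follows exactly the paper's argument: apply Corollary \ref{corollarycyclic} to embed the isotropy group into $\mu_{n-1}(K)$, then observe that the only roots of unity in a subfield of $\mathbb{R}$ are $\pm 1$. The extra remark on the parity of $n-1$ is a harmless refinement not present in (and not needed for) the paper's proof.
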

\begin{proof}[Proof]
Since $K \subseteq \mathbb{R}$, the only possible roots of units in $K$ are $\{\pm 1\}$.
\end{proof}

An  interesting consequence of  Corollary \ref{corollarycyclic} is that a quadratic derivation has a trivial isotropy group, regardless of whether it is simple or not. But cubic derivations can have an isotropy group of order 2.

\begin{corollary}
\begin{enumerate}

\item [(i)]If $d$ is a quadratic derivation  in two variables, then its isotropy group is trivial.
\item [(ii)]If $d$ be a cubic derivation  in two variables, then its isotropy group is either trivial or a group of order 2 (and both cases occur).
\end{enumerate}
\end{corollary}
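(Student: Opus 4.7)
The statement reduces almost entirely to Corollary \ref{corollarycyclic} together with two explicit examples. For (i), a quadratic derivation has $Y$-degree $n=2$, so Corollary \ref{corollarycyclic} embeds its isotropy group into $\mu_{n-1}(K)=\mu_1(K)=\{1\}$, forcing the group to be trivial. For (ii), applying the same corollary with $n=3$ embeds the isotropy group into $\mu_2(K)=\{\pm 1\}$, so it is either trivial or cyclic of order $2$; what remains is to exhibit one cubic derivation of each type.

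For the order-two case I would propose $d=\partial_X+Y^3\,\partial_Y$. A direct check that the involution $\rho\colon X\mapsto X,\ Y\mapsto -Y$ commutes with $d$ suffices: $d(\rho(X))=1=\rho(d(X))$ and $d(\rho(Y))=-Y^3=\rho(Y^3)=\rho(d(Y))$. Hence $\rho$ is a nontrivial element of the isotropy, necessarily of order $2$.

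For the trivial-isotropy case I would use $d=\partial_X+(1+Y^3)\,\partial_Y$. By Theorem \ref{twovariables}(i), every $\rho$ in the isotropy fixes $X$ and sends $Y$ to $b_0+b_1Y$ with $b_1\in\{\pm 1\}$ and $b_0\in K[X]$. If $b_1=1$, Theorem \ref{twovariables}(ii) already gives $\rho=\mathrm{id}$. If $b_1=-1$, expanding $\rho(d(Y))=d(\rho(Y))$ and comparing the coefficient of $Y^2$ forces $3b_0=0$, hence $b_0=0$; the constant-in-$Y$ term (\ref{eq1}) then reduces to $2=b_0'=0$, a contradiction in characteristic zero. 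So the isotropy is trivial.

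No genuine obstacle arises; the only subtle point is that eliminating the $b_1=-1$ case in the second example requires comparing coefficients of two distinct powers of $Y$ in $\rho(d(Y))=d(\rho(Y))$, not merely the constant term of (\ref{eq1}). Everything else follows directly from the results already established in this section.
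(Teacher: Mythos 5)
Your proof is correct and follows the paper's route exactly: both parts are immediate from Corollary \ref{corollarycyclic}, with the ``both cases occur'' claim settled by explicit cubic examples (the paper uses $d(Y)=h_1+h_3Y^3$ with $\rho(Y)=-Y$ for the order-two case and Nowicki's $d(Y)=Y^3+pX$ for the trivial case, while you use $Y^3$ and $1+Y^3$, both verified correctly). Your observation that eliminating $b_1=-1$ in the trivial-isotropy example needs the $Y^2$ coefficient to first force $b_0=0$ is a valid and careful point.
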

\begin{proof}[Proof]
It follows immediately from Corollary \ref{corollarycyclic}.
\end{proof}

\begin{example}
Let $d$ be the cubic derivation in two variables given by \\ $d(X)=1$, $d(Y)=h_1+h_3Y^3$,  $h_1, h_3\in K[X], h_3\neq 0$. Let $\rho$ be the isomorphisms of $K[X,Y]$ given by $\rho(X)=X$, $\rho(Y)=-Y$. Then $K[X,Y]_d=\{\text{id}, \rho\}$. It is therefore a cyclic group of order 2.
\end{example}

\begin{example}
Let $d$ be the derivation in two variables given by $d(X)=1$, \\ $d(Y)=Y^n+ pX$, where $n\geq 2,\, p\in K^{\star}$. Nowicki proved in \cite{nowicki} that $d$ is simple. We claim that its isotropy group is trivial. In fact, let $\rho\in K[X,Y]_d$. By theorem \ref{twovariables}, $\rho(Y)=b_0+b_1Y$, $b_0\in K[X], b_1\in K^{\star}$. By equation \ref{eq1}, we have $b_0^n+pX(1-b_1)=b_0^{\prime}$. Since $n\geq 2$,  $b_0\in K$. Then $p(1-b_1)X+b_0^n=0$. Then $b_1=1, b_0=0$ and therefore $\rho=$id.
\end{example}

 \section{Examples}

In this section, we give  three more examples. The first is of a non-simple \\ Shamsuddin derivation with an infinite isotropy group. Actually, Baltazar, in his thesis (\cite{baltazar-2014}), had already given such an example. The difference here is that our example is a Shamsuddin derivation in three variables while his example is not a Shamsuddin derivation and it is in two variables.

\begin{example} Let $d$ be the derivation of the polynomial ring $K[X,Y,Z]$
given by $d(X)=1, d(Y)=(1+XY)$ and $d(Z)=1+XZ$. This is a non-simple Shamsuddin derivation of $K[X,Y,Z]$, since the ideal $(Y-Z)$ is a $d$-ideal.
Its isotropy group is given by $\rho \in$ Aut$(K[X,Y,Z])$ such that $\rho(X)=X, \rho(Y)= aY+bZ, \rho(Z)=cY+dZ$ with $a+b=1, c+d=1$ and $ad-bc\neq 0$. Therefore it is an infinite group.
\end{example}

We now give  two examples of simple derivations of polynomial rings that are not Shamsuddin derivations but still have a trivial isotropy group. Since the techniques of proofs are very similar to the ones we have been given before, we will omit them.

\begin{example} Consider the derivation of the polynomial ring $K[X,Y,Z]$ given by  $d=\partial_X + (XY+1)\partial_Y + (YZ+1)\partial_Z$.  Nowicki proved in  \cite{nowickibook}, Example 13.4.3 that  this derivation is simple. Its isotropy group is trivial. Note that $d$ is not a Shamsuddin derivation.
\end{example}

\begin{example} Consider the derivation of the polynomial ring in $n$ variables \\ $K[X_1,X_2,\dots,X_n]$ given by  $d(X_1)=1, d(X_i)=a_iX_i+b_i$, where $a_i,b_i\in K[X_1,\dots,X_{i-1}], b_i\neq 0, deg_{X_{i-1}}(a_i)\geq 1$ and
$deg_{X_{i-1}}(b_i)< deg_{X_{i-1}}(a_i)$.
 Nowicki proved in \cite{nowickibook}, Example 13.4.2 that  this derivation is simple. Its isotropy group is trivial. Note that $d$ is not a Shamsuddin derivation.
\end{example}

\section{A new conjecture on the isotropy group of a simple derivation.}
Based on Baltazar (\cite{baltazar-2014},\cite{baltazar-2016})  and  Mendes-Pan results (\cite{mendes-pan}) and also on the results and examples we presented in this paper, we can make the following conjecture:

\textbf{Conjecture:} Let $d$ be a  derivation of the polynomial ring $K[X_1,X_2,\dots,X_n]$ in $n$ variables over a field $K$ of characteristic zero. Then $d$ is simple if, and only if, its isotropy group is trivial.

 \end{document}